%%
%% Copyright 2007, 2008, 2009 Elsevier Ltd
%%
%% This file is part of the 'Elsarticle Bundle'.
%% ---------------------------------------------
%%
%% It may be distributed under the conditions of the LaTeX Project Public
%% License, either version 1.2 of this license or (at your option) any
%% later version.  The latest version of this license is in
%%    http://www.latex-project.org/lppl.txt
%% and version 1.2 or later is part of all distributions of LaTeX
%% version 1999/12/01 or later.
%%
%% The list of all files belonging to the 'Elsarticle Bundle' is
%% given in the file `manifest.txt'.
%%

%% Template article for Elsevier's document class `elsarticle'
%% with numbered style bibliographic references
%% SP 2008/03/01
%%
%%
%%
%% $Id: elsarticle-template-num.tex 4 2009-10-24 08:22:58Z rishi $
%%
%%
\documentclass[preprint, 12pt, english]{elsarticle}
\usepackage{amsmath}
\usepackage{latexsym, amssymb}
\usepackage{amsthm}

\newtheorem{thm}{Theorem}[section] %the resolution could also be [subsection]

\newtheorem{lem}[thm]{Lemma}

\newtheorem{rem}[thm]{Remark}

\def\lra{{\,\longrightarrow\,}}

\newcommand\operA[2]{{\if!#2!\operatorname{#1}\else{\operatorname{#1}_{#2}^{\phantom{I}}}\fi}} % To be used within Bdefs. Usage: $\operA{N}{K/F}$ produces $N_{K/F}$; $\operA{N}{}$ produces $N$.

% \eqref{#1} %
% \eqref{#1} %
%
%
\newcommand\Cref[1]{{Corollary~\ref{#1}}}%
%
%
%
%
%

 % The centralizer

 % The multiplicative group
 % Produces nicely spaced [K:F]. Don't use in subscripts or superscripts -- there LaTeX manages by his own.
\newcommand{\Trace}[1][]{\if!#1!\operatorname{Tr}\else{\operatorname{Tr}_{#1}^{\phantom{I}}}\fi} % Usage: $\Tr[K/F](a)$.

\long\def\forget#1\forgotten{{}} %

\def\({\left(}
\def\){\right)}

\newif\iffurther
\furtherfalse
% \furthertrue

\newif\ifXY % turns XY version on/off
\XYtrue     % Turn it on
%\XYfalse    % Turn it off
%
\ifXY

\input xy
\input xyidioms.tex
\usepackage{xy}
\xyoption{all} %
\fi % For \ifXY

\usepackage{babel}

\journal{??}

\begin{document}

\begin{frontmatter}

\title{Chain Lemma for Biquaternion Algebras in Characteristic 2}

\author{Adam Chapman\corref{cor}\corref{cor2}}
\ead{adam1chapman@yahoo.com}
\cortext[cor2]{This work was supported by the U.S.-Israel Binational Science Foundation (grant no. 2010149)}
\cortext[cor]{The author is a PhD student of Prof. Uzi Vishne}
\address{Department of Mathematics, Bar-Ilan University, Ramat-Gan 52900,  Israel}

\begin{abstract}
In this paper, we prove that for a given biquaternion algebra over a field of characteristic two, one can move from one symbol presentation to another by at most three steps, such that in each step at least one entry remains unchanged. If one requires that in each step two entries remain the same then their number increases to fifteen. We provide even more basic steps that in order to move from one symbol presentation to another one needs to use up to forty-five of them.
\end{abstract}

\begin{keyword}
Chain lemma, common slot lemma, biquaternion algebra, quaternion algebra, characteristic 2, quadratic forms
\MSC[2010] primary 16A28, 16K20; secondary 15A18, 16H05
\end{keyword}

\end{frontmatter}

\section{Introduction}
Every quaternion algebra over a field $F$ has a symbol presentation $$[\alpha,\beta)=F[x,y : x^2+x=\alpha, y^2=\beta, x y+y x=y]$$ if $F$ is of characteristic $2$, and $$(\alpha,\beta)=F[x,y : x^2=\alpha, y^2=\beta, x y=-y x]$$ if $F$ is of characteristic not $2$, for some $\alpha,\beta \in F$.

A biquaternion algebra is a tensor product of two quaternion algebras. If $[\alpha,\beta)$ and $[\gamma,\beta)$ are symbol presentations of the quaternion algebras, then $[\alpha,\beta) \otimes [\gamma,\delta)$ is a symbol presentation of the biquaternion algebra. By a chain lemma we mean a theorem that provides a list of basic steps with which one can obtain from one symbol presentation of the biquaternion algebra all the other symbol presentations.

An element $x$ in the algebra is called Artin-Schreier if it satisfies $x^2+x \in F$ and square-central if $x^2 \in F$.

The chain lemma for biquaternion algebras in characteristic not two was studied recently in \cite{Siv} and \cite{ChapVish2}.

In this paper we prove a similar chain lemma for biquaternion algebras in case of characteristic $2$.
We study it through quadruples of standard generators.
A quadruple of generators is $(x,y,z,u)$ such that $$x^2+x=\alpha, y^2=\beta, z^2+z=\gamma, u^2=\delta,$$ $$x y+y x=y, x z=z x, x u=u x, y z=z y, y u=u y, z u+u z=u$$ where $[\alpha,\beta) \otimes [\gamma,\delta)$ is the algebra under discussion.
The quadruple consists naturally of two pairs, $(x,y)$ and $(z,u)$.
We are not concerned with the order of the pairs, i.e. $(x,y,z,u)=(z,u,x,y)$.
Of course the order of the elements inside the pair is important, the first element corresponds to a separable field extension of the center and the second corresponds to an inseparable field extension. The first element is Artin-Schreier, and the second element is square-central.

We define the following steps on a quadruple of generators $(x,y,z,u)$:
\begin{itemize}
\item[$\Lambda_3$]: At most three generators are changed.
\item[$\Lambda_2$]: At most one generator is changed in each pair.
\item[$\Pi$]: At most one pair is changed.
\item[$\Omega_s$]: $x$ and $z$ are preserved and $y$ and $u$ are multiplied by $a+b (x+z)$ for some $a,b \in F$
\item[$\Omega_i$]: $y$ and $u$ are preserved and an element of the form $a y u$ is added to $x$ and $z$ for some $a \in F$.
\item[$\Omega_c$] : $y$ and $z$ are preserved and $x$ changes to $x+b y (1+b y)^{-1} z$ and $u$ changes to $(1+b y) u$ for some $b \in F$.
\item[$\Lambda_1$]: At most one generator is changed.
\end{itemize}

We prove that one can move from one quadruple of generators to another by a chain consisting of up to three steps of type $\Lambda_3$.
We prove further that every step of type $\Lambda_3$ can be replaced with up to three steps of type $\Pi$ and two steps of type $\Lambda_2$.
Furthermore, we prove that each step of type $\Lambda$ can be replaced with up to either three steps of type $\Lambda_1$ or two of type $\Lambda_1$ and one of type $\Omega_i$, $\Omega_s$ or $\Omega_c$.
Since $\Pi$ changes only one quaternion algebra, it is known that $\Pi$ can be replaced with up to three steps of type $\Lambda_1$. Consequently, in order to move from one quadruple of generators to another one needs to do up to $45$ steps, where at most $6$ of them are of type $\Omega_i$, $\Omega_s$ or $\Omega_c$ and all the rest are of type $\Lambda_1$.

The basic steps on the quadruples of generators can be easily translated to basic steps on the symbol presentations.

The $\Omega_s$ step changes $[\alpha,\beta) \otimes [\gamma,\delta)$ to $$[\alpha,(a^2+a b+b^2 (\alpha+\gamma)) \beta) \otimes [\gamma,(a^2+a b+b^2 (\alpha+\gamma)) \delta)$$ for some given $a,b \in F$.

The $\Omega_i$ step changes $[\alpha,\beta) \otimes [\gamma,\delta)$ to $$[\alpha+a^2 \beta \delta,\beta) \otimes [\gamma+a^2 \beta \delta,\delta)$$ for some given $a \in F$.

The $\Omega_c$ step changes $[\alpha,\beta) \otimes [\gamma,\delta)$ to $[\alpha+\frac{b^2 \beta \gamma}{1+b^2 \beta},\beta) \otimes [\gamma,\delta(1+b^2 \beta))$ for some $b \in F$.

The $\Lambda_1$ step changes one of the quaternion algebras $[\alpha,\beta)$ to either to $[\alpha,(a^2+a b+b^2 \alpha) \beta)$ or $[\alpha+a^2+a+b^2 \beta,\beta)$ for some $\alpha,\beta \in F$.

Throughout this paper, let $A$ be a fixed biquaternion division algebra over a field $F$ of characteristic two.

\section{Decomposition with respect to maximal subfields}

In this section we shall prove that if $A$ contains a maximal subfield, generated either by two Artin-Schreier elements or one Artin-Schreier and one square-central, then it decomposes as the tensor product of two quaternion algebras such that each of the generators is contained in a different quaternion algebra.

These lemmas will be used later on in this paper.

\begin{lem}\label{instep0}
If $x$ and $z$ are commuting Artin-Schreier elements then there exist some square-central elements $u$ and $y$ such that $(x,y,z,u)$ is a quadruple of generators.
\end{lem}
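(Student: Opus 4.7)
The plan is to realize $A$ as a tensor product $Q_1 \otimes_F Q_2$ of quaternion $F$-subalgebras with $x \in Q_1$ and $z \in Q_2$; the standard square-central generators $y \in Q_1$, $u \in Q_2$ from the characteristic-$2$ quaternion structure then complete the quadruple, with the cross-commutation relations following automatically from the commuting of $Q_1$ and $Q_2$. I treat the non-degenerate case in which $K := F[x,z]$ is four-dimensional over $F$, so $K$ is a separable biquadratic maximal subfield of $A$ with Galois group $G = \langle \sigma, \tau\rangle \cong \mathbb{Z}/2 \times \mathbb{Z}/2$, where $\sigma$ sends $x \mapsto x+1$ fixing $z$, and $\tau$ is the symmetric automorphism on $z$.

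For the construction of $y$, Skolem--Noether applied to $\sigma$ yields $y' \in A^\times$ implementing $\sigma$ by conjugation, so $y' \in C_A(z)$, $xy' + y'x = y'$, and $(y')^2 \in K^\sigma = F[z]$ (since $(y')^2$ acts trivially on $K$ and is $\sigma$-fixed). I then seek $p \in K^\times$ such that $y := py'$ satisfies $y^2 = N_{K/F[z]}(p) \cdot (y')^2 \in F$. Such $p$ exists if and only if the quaternion $F[z]$-algebra $C_A(z)$ descends to an $F$-form, i.e.\ $C_A(z) \cong Q_1 \otimes_F F[z]$ for some quaternion $F$-algebra $Q_1$. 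Since $A$ is a biquaternion of exponent~$2$ split by the biquadratic extension $K$, its Brauer class decomposes as $[A] = [\alpha,\beta) + [\gamma,\delta)$ in $\Br(F)$ for some $\beta, \delta \in F^\times$ compatible with $K$; restricting to $F[z]$ annihilates $[\gamma,\delta)$ (as $z$ is its Artin--Schreier generator), so $[C_A(z)] = [\alpha,\beta)_{F[z]}$, yielding the descent with $y^2 = \beta \in F^\times$.

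Once $y$ is obtained, set $Q_1 := F\langle x,y\rangle \subset C_A(z)$ and $Q_2 := C_A(Q_1)$; the decomposition $A = Q_1 \otimes_F Q_2$ has $z \in Q_2$, and the standard characteristic-$2$ quaternion construction in $Q_2$ applied to the Artin--Schreier element $z$ produces $u \in Q_2$ with $u^2 \in F^\times$ and $zu + uz = u$, completing the quadruple. The main obstacle is the Brauer decomposition $[A] = [\alpha,\beta) + [\gamma,\delta)$ adapted to $K$ (equivalently, the descent of $C_A(z)$). This is the characteristic-$2$ analog of Albert's theorem on biquaternion decomposition compatible with a biquadratic splitting field; I would either cite it from the literature or establish it directly via a cocycle analysis of $H^2(G, K^\times)$ using the Hochschild--Serre filtration for $G = \mathbb{Z}/2 \times \mathbb{Z}/2$, which expresses $[A]$ as a sum of contributions indexed by the three quadratic subfields of $K$ and forces the two-quaternion decomposition above.
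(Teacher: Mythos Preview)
Your approach is correct in outline but follows a substantially heavier route than the paper. You reduce the lemma to the characteristic-$2$ analogue of Albert's theorem (a biquaternion algebra containing the biquadratic maximal subfield $K=F[x,z]$ decomposes as a tensor product of quaternion subalgebras adapted to $F[x]$ and $F[z]$), and defer that result either to the literature or to a Hochschild--Serre computation. That is legitimate, but since the Albert statement is essentially equivalent to the lemma itself, your argument functions as a reduction rather than a self-contained proof; the descent step for $C_A(F[z])$ is precisely the content you are asked to establish.

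The paper instead gives a short, elementary construction using an involution. Working in the quaternion $F[x]$-algebra $C_A(F[x])$, one first takes any $q$ with $q^2\in F[x]$ and $zq+qz=q$. Because $A$ has exponent at most $2$, it carries an involution of the first kind, and one may choose it so that $x^*=x+1$ and $z^*=z$ on $K$. Then $q^*$ again lies in $C_A(F[x])$ and satisfies $zq^*+q^*z=q^*$, so the symmetrization $u=q+q^*$ (or $u=q$ if already $*$-fixed) has $u^*=u$; since $u^2\in F[x]$ and is $*$-invariant, it lies in $F[x]^{\langle *\rangle}=F$. Splitting off $F[z,u]$ then produces $y$ in the complementary quaternion factor. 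No Brauer-group or descent machinery is needed, and the same involution-symmetrization idea drives the companion \Lref{instep}. Both arguments tacitly assume $F[x]\neq F[z]$, which you rightly flag as the non-degenerate hypothesis.
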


\begin{proof}
If $x$ and $z$ are commuting Artin-Schreier elements then $C_A(F[x])$ is a quaternion algebra containing $z$. This algebra contains some $q$ such that $q^2 \in F[x]$ and $z q+q z=q$.
The involution on $F[x,u]$ satisfying $x^*=x+1$ and $z^*=z$ extends to $A$. In particular, $q^* x=x q^*$, and therefore $q^* \in C_A(F[x])$.
If $q^*=q$ then by taking $u=q$, $u$ is square-central and $z u+u z=u$. Otherwise, we take $u=q+q^*$. In particular $A=A_0 \otimes F[z,u]$. $x$ is in the quaternion subalgebra $A_0$ and therefore there exists some square-central element $y \in A_0$ such that $x y+y x=y$.
\end{proof}

\begin{lem}\label{instep}
If $x$ is Artin-Schreier and $u$ is square-central then there exist some Artin-Schreier element $z$ and some square-central element $y$ such that $(x,y,z,u)$ is a quadruple of generators.
\end{lem}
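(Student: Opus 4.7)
The plan is to mirror the proof of \Lref{instep0}: construct the Artin-Schreier element $z$ inside the quaternion algebra $C_A(F[x])$ by means of an involution, and then obtain $y$ from the double centralizer theorem. Implicitly assuming $xu=ux$ (otherwise the conclusion is vacuous), $F[x,u]$ is a four-dimensional maximal subfield of $A$, and $C_A(F[x])$ is a quaternion algebra over $F[x]$ in which $u$ remains square-central. Standard structure theory for characteristic-two quaternion algebras supplies an element $z_0 \in C_A(F[x])$ with $z_0^2+z_0=r\in F[x]$ and $z_0 u+u z_0=u$. As in the proof of \Lref{instep0}, the $F$-automorphism of $F[x,u]$ sending $x\mapsto x+1$ and $u\mapsto u$ extends to an $F$-involution $*$ of $A$, and a quick check yields $z_0^*\in C_A(F[x])$ and $z_0^* u + u z_0^* = u$.

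The hardest step is the symmetrization. Unlike the situation in \Lref{instep0}, the relation $Zu+uZ=u$ is affine rather than linear in $Z$, so the naive symmetrized element $z_0+z_0^*$ commutes with $u$ and fails the required anticommutation. The remedy is to set $\xi := z_0+z_0^*$, observe that $\xi$ centralizes $F[x,u]$ and is $*$-fixed, and conclude that $\xi$ lies in the fixed subfield $F[u]$ of $F[x,u]$. Writing $\xi = \xi_0+\xi_1 u$ with $\xi_0,\xi_1\in F$, the candidate is $z := z_0+\xi x$. Direct computation shows that $z^*=z$, that $z$ centralizes $F[x]$, and that $zu+uz=u$ (the $\xi x$ contribution vanishes since $\xi\in F[u]$ commutes with $u$). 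For the final requirement $z^2+z\in F$, the key ingredient is the identity $r_1 = \xi_0^2+\xi_0+\xi_1^2\delta$, where $r_1$ is the $x$-coefficient of $r$ and $\delta=u^2$; this identity is forced by applying $*$ to $z_0^2+z_0=r$ and evaluating $z_0 z_0^*+z_0^* z_0 = \xi_1 u$ via the $u$-anticommutation.

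With such a $z$ in hand, the rest is standard. The subalgebra $F[z,u]$ is a quaternion $F$-subalgebra of $A$, so by the double centralizer theorem $A\isom F[z,u]\tensor C_A(F[z,u])$, with $x$ lying in the quaternion $F$-subalgebra $C_A(F[z,u])$ because $x$ commutes with both $z$ and $u$. Since $x$ is Artin-Schreier in that quaternion algebra, it admits a square-central partner $y$ satisfying $xy+yx=y$, and $(x,y,z,u)$ is the required quadruple of generators.
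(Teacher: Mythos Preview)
Your argument is correct, and it is genuinely different from the paper's. Both proofs start identically: pick $z_0\in C_A(F[x])$ with $z_0^2+z_0\in F[x]$ and $z_0u+uz_0=u$, extend the automorphism $x\mapsto x+1$, $u\mapsto u$ to an involution $*$, and observe that $\xi=z_0+z_0^*\in F[u]$. From there the paper symmetrizes \emph{multiplicatively}, seeking $z$ of the form $q(a+bu)q^*$; the condition $zu+uz=u$ becomes a $2\times 2$ linear system in $a,b$, and one must treat separately the degenerate case where the system is singular (which is exactly when $z_0$ already works). Your correction is \emph{additive}: $z=z_0+\xi x$ is automatically $*$-fixed, centralizes $x$, and satisfies $zu+uz=u$, and the single identity $r_1=\xi_0^2+\xi_0+\xi_1^2\delta$ (which you correctly derive by applying $*$ to $z_0^2+z_0=r$) kills the $x$-coefficient of $z^2+z$ in one stroke. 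Your route is shorter and avoids the case split entirely; the paper's route has the minor advantage that its $z$ lives in the $F$-span of $\{1,u,z_0,z_0u\}$ rather than involving $x$, but this plays no role downstream.
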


\begin{proof}
If $x$ is Artin-Schreier and $u$ is a square-central element commuting with $x$ then $C_A(F[x])$ is a quaternion algebra containing $u$. This algebra contains some $q$ such that $q^2+q \in F[x]$ and $q u+u q=u$.
The involution on $F[x,u]$ satisfying $x^*=x+1$ and $u^*=u$ extends to $A$. In particular, $q^* x=x q^*$, and therefore $q^* \in C_A(F[x])$.

For some $\beta \in F$, $u^2=\beta$.
Write $\mu=q (a+b u) q^*$ for some unknown $a,b \in F$.
Since $q+q^*$ is symmetric with respect to $*$ and commutes with $u$, $q+q^*=c+d u$ for some fixed $c,d \in F$.
Obviously $\mu^*=\mu$.
We want $\mu u+u \mu=u$.
It is a straight-forward calculation to see the condition becomes $1=a+a c+b d \beta+(a d+b c) u$.
Consequently, we want the following system to be satisfied:
\begin{eqnarray*}
1 & = & (c+1) a+d \beta b\\
0 & = & d a+c b
\end{eqnarray*}
This system has a solution, unless $c (c+1)=d^2 \beta$.

If $c (c+1) \neq d^2 \beta$ then by taking $z=q (a+b u) q^*$ where $a,b$ is a solution to the system above, $z$ is Artin-Schreier and $z u+u z=u$.

If $c (c+1)=d^2 \beta$ then $(q^*)^2+q^*=(q+c+d u)^2+(q+c+d u)=q^2+c^2+d^2 \beta+d u+q+c+d u=q^2+q$. This means that $q^2+q$ is invariant under $*$, and therefore $q^2+q \in F$. In this case we will take $z=q$.

All in all, one can find an Artin-Schreier element $z$ such that $z u+u z=u$ and $x z=z x$,
which means that $A=A_0 \otimes F[z,u]$. $x$ is in the quaternion subalgebra $A_0$ and therefore there exists some square-central element $y \in A_0$ such that $x y+y x=y$.
\end{proof}

\section{A chain consisting of steps of type $\Lambda_3$}

In this section we will show that every two generating quadruples are connected by a chain of up to three steps of type $\Lambda_3$.

\begin{lem}\label{E1lem}
For any two Artin-Schreier elements $x,z$, if they do not commute then the subalgebra $F[x,z]$ is a quaternion algebra, whose center is either $F$ or a quadratic extension of it.
\end{lem}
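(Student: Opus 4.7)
The plan is to exhibit a central element of $F[x,z]$ and then identify $F[x,z]$ with a quaternion algebra over the subfield that element generates. First I would set $w:=xz+zx$; by hypothesis $w\neq 0$. A direct expansion using $x^2+x=\alpha$ and $z^2+z=\gamma$ (together with $\mathrm{char}(F)=2$) yields $xw+wx=w$ and $zw+wz=w$, so $w$ commutes with $e:=x+z$. Consequently $v:=w+e$ satisfies $vx+xv=w+w=0$ and $vz+zv=0$, so $v\in Z(F[x,z])$.

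Next I would exploit the identity $z=x+w+v$, which shows $F[x,z]=F(v)[x,w]$. Expanding $(x+z)^2$ yields $e^2=e+(\alpha+\gamma)+w=v+(\alpha+\gamma)$, and squaring $v=w+e$ (using $ew=we$) gives the scalar identity $w^2=v^2+v+(\alpha+\gamma)\in F(v)$. Combined with $x^2+x=\alpha\in F(v)$ and the commutation $wxw^{-1}=x+1$ (a rewriting of $xw+wx=w$), these are precisely the defining relations of the quaternion presentation $[\alpha,w^2)$ over $F(v)$. Since $F[x,z]$ lies inside the division algebra $A$, it is itself a division algebra, so this quaternion has dimension $4$ over its center and $Z(F[x,z])=F(v)$.

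Finally I would bound $[F(v):F]$. Any subfield of a central division algebra of degree $4$ has $F$-degree dividing $4$, so $[F(v):F]\in\{1,2,4\}$. The value $4$ is excluded directly: if $F(v)$ were a maximal subfield of $A$, it would be self-centralizing ($C_A(F(v))=F(v)$), but $F(v)=Z(F[x,z])$ commutes with all of $F[x,z]$, forcing the noncommutative algebra $F[x,z]$ into the commutative field $F(v)$, a contradiction. Hence $[F(v):F]\in\{1,2\}$, i.e.\ the center of $F[x,z]$ is either $F$ or a quadratic extension. The main piece of bookkeeping is the scalar identity $w^2=v^2+v+(\alpha+\gamma)$; all the other steps are short calculations or invocations of standard structure theory.
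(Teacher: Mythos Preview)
Your proof is correct and follows essentially the same route as the paper's. Both arguments single out the same central element (your $v=w+e=x+z+xz+zx$ is exactly the paper's $s=x+t$) and recognize $x$ together with $w=xz+zx$ (the paper's $r$) as quaternion generators over $F(v)$; you are just a bit more explicit in computing $w^2\in F(v)$ and in ruling out $[F(v):F]=4$ via self-centralization rather than the paper's dimension count.
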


\begin{proof}
There exist $a,b \in F$ such that $x^2+x=a$ and $z^2+z=b$.
Let $r=x z+z x$, $t=x z+z x+z=r+z$.
It is easy to see that $x r+r x=r$, and $x t+t x=0$.

Since $z=r+t$, $z^2+z+b=r^2+t^2+r t+t r+r+t+b=0$.
Therefore $(z^2+z+b) x+x (z^2+z+b)=r t+t r+r=0$.

Since $s=x+t$ commutes with $x,t,r$, it is in the center of $F[x,z]$.
The elements $x$ and $r$ generate a quaternion algebra over the center of $F[x,z]$, and since $t$ differs from $x$ by a central element, $F[x,z]$ is a quaternion algebra over its center.

Since $F[x,z]$ is a subalgebra of a biquaternion algebra, it cannot be the entire algebra, and therefore its center is either $F$ or a quadratic field extension of $F$.
\end{proof}

\begin{lem}\label{E1}
If $x$ and $z$ are not commuting Artin-Schreier elements then there exists some $w \in V$ which is either Artin-Schreier or square-central and commutes with them both.
\end{lem}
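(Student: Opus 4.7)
The plan is to distinguish cases according to the $F$-dimension of the center $Z$ of the quaternion algebra $F[x,z]$ given by \Lref{E1lem}. Because $F[x,z]$ is a proper, noncommutative division subalgebra of the $16$-dimensional algebra $A$, its center $Z$ must be either $F$ itself or a quadratic extension of $F$ contained in $A$.

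If $Z = F$, then $F[x,z]$ is a central simple $F$-subalgebra of $A$ of dimension $4$, i.e.\ a quaternion $F$-subalgebra. The double centralizer theorem yields a decomposition $A \isom F[x,z] \tensor[F] C_A(F[x,z])$, in which $C_A(F[x,z])$ is a $4$-dimensional central division $F$-algebra, hence a quaternion algebra. Every quaternion algebra in characteristic two carries both an Artin-Schreier and a square-central standard generator; choose any such element from $C_A(F[x,z])$ to be $w$. By the very definition of the centralizer, $w$ commutes with both $x$ and $z$.

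If $[Z : F] = 2$, then any element of $Z \setminus F$ --- which exists because $Z$ is a two-dimensional $F$-algebra strictly larger than $F$ --- has minimal polynomial of the form $T^2 + aT + b$ over $F$. In characteristic two one can normalize: when $a \neq 0$, dividing by $a$ produces an Artin-Schreier element; when $a = 0$, the element is already square-central. Set $w$ to be this normalized element. Since $w \in Z$, which lies in the center of $F[x,z]$, it commutes with $x$ and $z$. A natural explicit candidate is $s = x + t$ from the proof of \Lref{E1lem}, valid whenever $s \notin F$; otherwise any other element of $Z \setminus F$ works.

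The main point to get right is the case split itself --- recognizing $F[x,z]$ as a quaternion algebra over its center via \Lref{E1lem} and bounding $[Z : F]$ by the dimension and division property of $A$. Once that is in place, each branch reduces to a standard fact: the double centralizer theorem in the first case, and the classification of quadratic field extensions in characteristic two in the second.
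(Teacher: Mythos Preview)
Your proof is correct and follows essentially the same route as the paper: both arguments invoke \Lref{E1lem} to identify $F[x,z]$ as a quaternion algebra over its center, split into the two cases $Z=F$ and $[Z:F]=2$, and then use the double centralizer theorem in the first case and a generator of $Z$ (normalized to Artin-Schreier or square-central form) in the second. The only differences are cosmetic---the order of the cases and the level of detail you give for the normalization step.
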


\begin{proof}
If the center of $F[x,z]$ is a quadratic extension of $F$ then it is generated by some $w \in V$, and that finishes the proof.
Otherwise, according to Lemma \ref{E1lem} the center of $F[x,z]$ is $F$ and $A=F[x,z] \otimes F[w,u : w^2+w=c,u^2=d,w u+u w=u]$ for some $c,d \in F$, and this also finishes the proof.
\end{proof}

\begin{thm}\label{Onesteps}
Every two quadruples of generators are connected by a chain of up to three steps of type $\Lambda_3$.
\end{thm}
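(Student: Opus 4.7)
The strategy is to use the two ``instep'' lemmas, \Lref{instep0} and \Lref{instep}, as engines for constructing intermediate quadruples around one or two pre-specified ``anchor'' elements, and to invoke \Lref{E1} as a bridge when no such anchor is immediately available. I will focus throughout on the Artin-Schreier element $x_2$ of the target quadruple $Q_2 = (x_2, y_2, z_2, u_2)$ and analyze how it relates to the generators of the source quadruple $Q_1 = (x_1, y_1, z_1, u_1)$. (Recall that, by definition, a $\Lambda_3$ step is any transition from one quadruple to another in which at least one generator is preserved.)

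If $x_2$ commutes with some generator $g$ of $Q_1$, I can build an intermediate quadruple around the commuting pair $\{g, x_2\}$: when $g$ is Artin-Schreier, \Lref{instep0} produces a quadruple $Q'$ containing both, and when $g$ is square-central, \Lref{instep} does so (taking $x_2$ as the Artin-Schreier input and $g$ as the square-central input). In either situation the chain $Q_1 \to Q' \to Q_2$ consists of two $\Lambda_3$ steps: the first preserves $g$ and the second preserves $x_2$.

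The remaining case is when $x_2$ commutes with no generator of $Q_1$; in particular $x_1$ and $x_2$ are two non-commuting Artin-Schreier elements, so \Lref{E1} supplies a bridge element $w$, either Artin-Schreier or square-central, which commutes with both $x_1$ and $x_2$. Applying the appropriate instep lemma twice, I extend $\{x_1, w\}$ to a quadruple $Q_a$ containing both and $\{w, x_2\}$ to a quadruple $Q_b$ containing both; the chain $Q_1 \to Q_a \to Q_b \to Q_2$ is then a sequence of three $\Lambda_3$ steps, preserving $x_1$, then $w$, then $x_2$.

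The main conceptual obstacle is this last case: without a bridge one has no systematic way of inserting a generator compatible with both $x_1$ and $x_2$, and the chain length could in principle grow. The point of \Lref{E1} is to produce $w$ so that the quadruple $Q_b$ automatically shares $x_2$ with $Q_2$, so the final step is immediate and the total count is three. The remaining verifications are routine: one only needs to check in each case that the hypotheses of \Lref{instep0} or \Lref{instep} apply to the designated pair, and that the preserved generator sits in a compatible slot (Artin-Schreier or square-central) in the adjacent quadruples, which is guaranteed by the way $w$ was chosen.
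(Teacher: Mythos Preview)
Your proposal is correct and follows essentially the same approach as the paper: use \Lref{instep0}/\Lref{instep} to build intermediate quadruples around a shared Artin-Schreier (or square-central) anchor, and invoke \Lref{E1} to produce a bridge element $w$ when the chosen Artin-Schreier elements fail to commute. The only difference is cosmetic: the paper compares $x_2$ solely with the Artin-Schreier generator $x_1$ of $Q_1$ (so the case split is just ``$x_1,x_2$ commute'' versus ``they do not''), whereas you test $x_2$ against every generator of $Q_1$; this occasionally yields a two-step chain where the paper would use three, but the bound and the underlying mechanism are identical.
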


\begin{proof}
Let $(x,y,z,u)$ and $(x',y',z',u')$ be two quadruples of generators.
If $x$ and $x'$ are not commuting then according to Lemma \ref{E1} there exists some $w$ which is either Artin-Schreier or square-central commuting with $x$ and $x'$.

If $w$ is Artin-Schreier then according to Lemma \ref{instep0} there exist $s,t \neq 0$ such that $(x,s,w,t)$ is a quadruple of generators.

Similarly, there exist some $s',t'$ such that $(x',s',w,t')$ is a quadruple of generators.

Consequently, there is a chain $$(x,y,z,u) \stackrel{\Lambda_3}{\lra} (x,s,w,t) \stackrel{\Lambda_3}{\lra} (x',s',w,t') \stackrel{\Lambda_3}{\lra} (x',y',z',u').$$

If $w$ is square-central then according to Lemma \ref{instep} there exist $s,t \neq 0$ such that $(x,s,t,w)$ is a quadruple of generators.

Similarly, there exist some $s',t'$ such that $(x',s',t',w)$ is a quadruple of generators.

Consequently, there is a chain $$(x,y,z,u) \stackrel{\Lambda_3}{\lra} (x,s,t,w) \stackrel{\Lambda_3}{\lra} (x',s',t',w) \stackrel{\Lambda_3}{\lra} (x',y',z',u').$$

If $x$ and $x'$ are commuting then according to Lemma \ref{instep0} there exist $s,t \neq 0$ such that $(x,s,x',t)$ is a quadruple of generators.

Consequently, there is a chain $$(x,y,z,u) \stackrel{\Lambda_3}{\lra} (x,s,x',t) \stackrel{\Lambda_3}{\lra} (x',y',z',u').$$
\end{proof}

\section{Replacing a step of type $\Lambda_3$ with steps of types $\Pi$ and $\Lambda_2$}

In this section we shall show how a step of type $\Lambda_3$ can be obtained by up to three steps of type $\Pi$ and two of type $\Lambda_2$.

\begin{lem}\label{notcomm}
If $y$ and $y'$ are two non-commuting square-central elements in $A$ then $F[y,y']$ is a quaternion algebra either over $F$ or over a quadratic extension of $F$. In particular, there exists either an Artin-Schreier element or a square-central element that commutes with both of them.
\end{lem}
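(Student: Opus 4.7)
The plan is to mimic the structure of \Lref{E1lem}, with the roles of Artin-Schreier and square-central elements swapped. Write $a = y^2$ and $b = (y')^2$ in $F$, and set $r = yy' + y'y$, which is nonzero (hence invertible, since $A$ is a division algebra) because $y$ and $y'$ do not commute. A short characteristic-$2$ calculation using $y^2, (y')^2 \in F$ shows $yr = ry$ and $y'r = ry'$, so $r$ lies in the center of $F[y, y']$; expanding $(yy')^2 = y(y'y)y'$ then yields the quadratic relation $(yy')^2 + r \cdot yy' + ab = 0$.

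Setting $K = F[r]$ and $x = r^{-1} yy'$, this relation rewrites as $x^2 + x = ab/r^2 \in K$, and a direct computation gives $xy + yx = y$. So $(x, y)$ forms a standard pair of quaternion generators over $K$, and since $y' = r a^{-1} \cdot yx$ lies in $K[y, x]$, we get $F[y, y'] = K[y, x]$, a quaternion algebra over $K$.

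The step that is not purely computational is bounding $[K : F] \leq 2$: any subfield of the $16$-dimensional central simple $F$-algebra $A$ has $F$-dimension at most $4$, and if $[K : F] = 4$ then $K$ would be a maximal, self-centralizing subfield of $A$, which would force $y, y' \in K$ and contradict the non-commutativity of $y$ and $y'$. Thus $F[y, y']$ is a quaternion algebra over $F$ or over a quadratic extension of $F$. The \emph{in particular} clause then follows: if $[K : F] = 2$, the minimal polynomial of $r$ over $F$ has the form $t^2 + \mu t + \nu$ with $\mu, \nu \in F$, so either $r$ (when $\mu = 0$) is square-central or $r/\mu$ (when $\mu \neq 0$) is Artin-Schreier, giving an element of $K$ that commutes with both $y$ and $y'$; if $[K : F] = 1$, the double centralizer theorem presents $C_A(F[y,y'])$ as a quaternion $F$-algebra, and any of its Artin-Schreier generators commutes with $y$ and $y'$.
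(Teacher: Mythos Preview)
Your argument is correct and follows essentially the same route as the paper: both identify the commutator $yy'+y'y$ as a nonzero element of the center of $F[y,y']$ and then exhibit an explicit Artin--Schreier/square-central pair generating $F[y,y']$ as a quaternion algebra over $K=F[yy'+y'y]$. The only cosmetic difference is the choice of generators: the paper uses the pair $\bigl(y(yy'+y'y+y')(yy'+y'y)^{-1},\,yy'+y'y+y'\bigr)$, whereas you use $\bigl((yy'+y'y)^{-1}yy',\,y\bigr)$; in fact your $x$ and the paper's Artin--Schreier generator $q$ satisfy $q=x+y$, so the two presentations are directly related. Your treatment of the dimension bound on $K$ and of the ``in particular'' clause is actually more explicit than the paper's, which simply asserts both conclusions.
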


\begin{proof}
Let $t=y y'+y' y$ and $r=y y'+y' y+y'$. It is easy to see that $y t=t y$, $y' t=t y'$ and $y r+r y=t$. In particular $t$ is in the center of $F[y,y']$.
If $t=0$ then $y'=r$ and $y'$ commutes with $y$, but we assumed the contrary, and so $t \neq 0$.
For similar reasons $r \neq 0$.

Let $q=y r t^{-1}$. It is a straight-forward calculation to see that $q \in V$ and $q r+r q=r$. Consequently $q$ and $r$ generate a quaternion algebra over the center of $F[y,y']$. Since this center contains $t$, it is easy to see that $y$ and $y'$ belong to that quaternion algebra, and therefore $F[y,y']=K[q,r]$ where $K=Z(F[y,y'])$.
Since it is a subalgebra of a biquaternion algebra over $F$, its center can be either $F$ or a quadratic extension of $F$. In both cases there exists either an Artin-Schreier element or a square-central element that commutes with both $y$ and $y'$.
\end{proof}

\begin{thm} \label{TwoSteps}
Every step of type $\Lambda_3$ can be achieved by at most three steps of type $\Pi$ and two of type $\Lambda_2$.
\end{thm}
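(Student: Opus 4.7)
Plan: A step of type $\Lambda_3$ preserves at least one of the four generators. By the pair-swap symmetry of a quadruple, I may assume the preserved generator lies in the first pair, so either $x = x'$ or $y = y'$. If at most two generators actually change, the step is itself of type $\Pi$ (both changes in one pair) or of type $\Lambda_2$ (one change per pair), and the conclusion holds in a single step. The substantive case is when exactly three generators change; by symmetry I focus first on the situation $(x, y, z, u) \to (x, y', z', u')$, where all three of $y, z, u$ are replaced.

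The main strategy is to find a bridging second pair $(z^*, u^*)$ compatible with both $y$ and $y'$, meaning that $(x, y, z^*, u^*)$ and $(x, y', z^*, u^*)$ are both quadruples of generators; equivalently, $(z^*, u^*) \sub C_A(F[x, y, y'])$. When such a pair is available, the chain
\[
(x, y, z, u) \stackrel{\Pi}{\lra} (x, y, z^*, u^*) \stackrel{\Pi}{\lra} (x, y', z^*, u^*) \stackrel{\Pi}{\lra} (x, y', z', u')
\]
uses only three $\Pi$ steps (the middle one changes only $y$, which lies in a single pair). To construct $(z^*, u^*)$ I apply Lemma \ref{notcomm} to the two square-central elements $y, y'$: either they commute, or the lemma yields an element $w$ commuting with both and being either Artin-Schreier or square-central. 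Lemma \ref{instep} (or \ref{instep0}) then extends $w$, together with an appropriate partner, to a full quadruple, and the pair of that quadruple not containing $w$ becomes the candidate bridge $(z^*, u^*)$.

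When the commuting element $w$ does not itself commute with $x$, so that the candidate bridge cannot be used directly in an $x$-based quadruple, I interpose up to two $\Lambda_2$ steps into the chain, giving a structure of the form
\[
(x, y, z, u) \stackrel{\Pi}{\lra} (x, y, z_1, u_1) \stackrel{\Lambda_2}{\lra} (x, y^\sharp, z_1, u_1') \stackrel{\Pi}{\lra} (x, y^\sharp, z_2, u_2) \stackrel{\Lambda_2}{\lra} (x, y', z_2, u_2') \stackrel{\Pi}{\lra} (x, y', z', u').
\]
Each $\Lambda_2$ step is permitted to change $y$ in the first pair simultaneously with one generator of the second pair, thereby conjugating the second pair into a position compatible with the new value of the square-central generator of the first pair. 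The case $y = y'$ (square-central preserved) is handled symmetrically, using Lemma \ref{E1} on the non-commuting Artin-Schreier elements $x, x'$ in place of Lemma \ref{notcomm}, and exchanging the roles of Artin-Schreier and square-central generators throughout. The chief obstacle is the bookkeeping: at each intermediate quadruple one must verify that every required commutation, square, and Artin-Schreier relation holds so that the quadruple is valid, and the case split on whether the bridging element $w$ from Lemma \ref{notcomm} is Artin-Schreier or square-central, and whether it commutes with $x$, must be organized so that the total number of steps never exceeds three of type $\Pi$ and two of type $\Lambda_2$.
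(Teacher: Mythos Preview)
Your outline follows the paper's approach: split on whether the preserved generator is Artin--Schreier or square-central, and invoke \Lref{notcomm} (resp.\ \Lref{E1lem}) to produce an element $t$ commuting with both $y,y'$ (resp.\ $x,x'$). But two genuine gaps remain.

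First, your primary device, a bridging \emph{pair} $(z^*,u^*)\sub C_A(F[x,y,y'])$, cannot exist except in the trivial subcase $y'\in F[x,y]$. Indeed $C_A(F[x,y,y'])=F[z,u]\cap F[z',u']$, and once $F[x,y]\neq F[x,y']$ this intersection is a proper $F$-subalgebra of the division quaternion algebra $F[z,u]$, hence at most a quadratic field and never large enough to contain a quaternion pair. The paper therefore works with a single bridging \emph{element}, and the resulting three-step chain is $\Pi,\Lambda_2,\Pi$ (the middle step changes $y$ together with the complementary generator of the second pair), not three $\Pi$'s. The element $t$ from \Lref{notcomm} need not lie in $F[z,u]\cap F[z',u']$ since it need not commute with $x$; the missing idea is to pass to $\mu=xt+tx+t$, which does commute with all of $x,y,y'$. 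When $\mu\in F$ one shifts $t$ by a scalar to arrange $\mu=0$, forcing $xt+tx=t$ and $t$ square-central; only then is the five-step $\Pi,\Lambda_2,\Pi,\Lambda_2,\Pi$ chain (via $y^\sharp=t$ and the products $ty,ty'$) required. Your template has this shape, but without the $\mu$ construction there is no mechanism producing the intermediate generators.

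Second, the claim that the case $y=y'$ is ``handled symmetrically'' is false. The analogous bridge there is $\mu=t+yty^{-1}$, and in characteristic~$2$ a scalar shift of $t$ leaves $\mu$ unchanged, so the possibility $\mu\in F^{\times}$ cannot be removed. The paper shows this forces $\mu=1$ and $t$ Artin--Schreier with $ty+yt=y$; one must then pass through $t+x$ and $t+x'$, with a further case split on whether each is (up to scalar) Artin--Schreier or square-central. This subcase has no counterpart in the Artin--Schreier-preserved situation and is the most delicate part of the argument; your plan does not anticipate it.
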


\begin{proof}
A step of type $\Lambda_3$ preserves either an Artin-Schreier generator or a square-central generator.

Assume that it preserves an Artin-Schreier generator, i.e. $$(x,y,z,u)\stackrel{\Lambda_3}{\lra}(x,y',z',w').$$

If $y' \in F[x,y]$ then $$(x,y,z,u)\stackrel{\Lambda_1}{\lra}(x,y',z,u)\stackrel{\Pi}{\lra}(x,y',z',u').$$

Otherwise, if $y'$ commutes with $y$ then $$(x,y,z,u)\stackrel{\Pi}{\lra}(x,y,?,y y')\stackrel{\Lambda_2}{\lra}(x,y',?,y y')\stackrel{\Pi}{\lra}(x,y',z',u').$$

Assume that they do not commute.
According to Lemma \ref{notcomm}, there exists either an Artin-Schreier element or a square-central element $t$ commuting with both $y$ and $y'$.

If $\mu=x t+t x+t \not \in F$ then it is a straight-forward calculation to show that $\mu$ commutes with $x$, $y$ and $y'$, and so $\mu$ generates a quadratic extension in both $F[z,u]$ and $F[z',u']$. If it is separable then $$(x,y,z,u)\stackrel{\Pi}{\lra}(x,y,\mu,?)\stackrel{\Lambda_2}{\lra}(x,y',\mu,?)\stackrel{\Pi}{\lra}(x,y',z',u'),$$ and if inseparable then $$(x,y,z,u)\stackrel{\Pi}{\lra}(x,y,?,\mu)\stackrel{\Lambda_2}{\lra}(x,y',?,\mu)\stackrel{\Pi}{\lra}(x,y',z',u').$$

Otherwise, $t$ could be picked such that $\mu=0$ and then $x t+t x=t$, and therefore $t$ must be square-central. In this case \begin{eqnarray*}(x,y,z,u)&\stackrel{\Pi}{\lra}&(x,y,?,t y)\stackrel{\Lambda_2}{\lra}(x,t,?,t y)\\&\stackrel{\Pi}{\lra}&(x,t,?,t y')\stackrel{\Lambda_2}{\lra}(x,y',?,t y')\stackrel{\Pi}{\lra}(x,y',z',u').
\end{eqnarray*}

Assume that the initial $\Lambda_3$-step preserves a square-centarl generator, i.e. $$(x,y,z,u)\stackrel{\Lambda_3}{\lra}(x',y,z',w').$$

If $x' \in F[x,y]$ then $$(x,y,z,u)\stackrel{\Lambda_1}{\lra}(x',y,z,w)\stackrel{\Pi}{\lra}(x',y,z',w').$$

Otherwise, if $x'$ commutes with $x$ then $$(x,y,z,u)\stackrel{\Pi}{\lra}(x,y,x+x',?)\stackrel{\Lambda_2}{\lra}(x',y,x+x',?)\stackrel{\Pi}{\lra}(x',y,z',u').$$

Assume that they do not commute.
According to Lemma \ref{E1lem}, there exists either an Artin-Schreier element or a square-central element $t$ commuting with both $x$ and $x'$.

Let $\mu=t+y t y^{-1}$. This element commutes with $x$, $x'$ and $y'$. If $\mu \not \in F$ then $\mu$ generates a quadratic extension in both $F[z,u]$ and $F[z',u']$. If it is separable then $$(x,y,z,u)\stackrel{\Pi}{\lra}(x,y,\mu,?)\stackrel{\Lambda_2}{\lra}(x,y',\mu,?)\stackrel{\Pi}{\lra}(x,y',z',u'),$$ and if inseparable then $$(x,y,z,u)\stackrel{\Pi}{\lra}(x,y,?,\mu)\stackrel{\Lambda_2}{\lra}(x,y',?,\mu)\stackrel{\Pi}{\lra}(x,y',z',u').$$

If $\mu=0$ then $t$ commutes with $y$ and hence $t \in F[z,u]$. If $t$ is square-central then $$(x,y,z,u)\stackrel{\Pi}{\lra}(x,y,t,?)\stackrel{\Lambda_2}{\lra}(x',y,t,?)\stackrel{\Pi}{\lra}(x',y,z',u'),$$ and if Artin-Schreier then $$(x,y,z,u)\stackrel{\Pi}{\lra}(x,y,?,t)\stackrel{\Lambda_2}{\lra}(x',y,?,t)\stackrel{\Pi}{\lra}(x',y,z',u').$$

If $\mu \in F^\times$ then $(\mu^{-1} t) y+y (\mu^{-1} t)=y$, which means that $\mu^{-1} t$ is Artin-Schreier, but $t$ was either Artin-Schreier or square-central to begin with, and therefore $\mu=1$.
In this case, $t+x,t+x' \not \in F$, because otherwise $x$ and $x'$ commute, and we assumed that they do not.
Now, $t+x$ commutes with both $x$ and $y$, which means that it generates a quadratic extension of $F$ inside $F[z,u]$, which means that either $a (t+x)$ is Artin-Schreier for some $a \in F^\times$ or $t+x$ is square-central.
Similarly, $t+x'$ commutes with both $x'$ and $y$, which means that it generates a quadratic extension of $F$ inside $F[z',u']$, which means that either $a' (t+x')$ is Artin-Schreier for some $a' \in F^\times$ or $t+x'$ is square-central.

If $a (t+x)$ and $a' (t+x')$ are Artin-Schreier then we have \begin{eqnarray*}(x,y,z,u)&\stackrel{\Pi}{\lra}&(x,y,a(t+x),?)\stackrel{\Lambda_2}{\lra}(t,y,a(t+x),?)\\&\stackrel{\Pi}{\lra}&(t,y,a'(t+x'),?)\stackrel{\Lambda_2}{\lra}(x',y,a'(t+x'),?)\stackrel{\Pi}{\lra}(x',y,z',u').
\end{eqnarray*}

If $t+x$ and $t+x'$ are square-central then we have
\begin{eqnarray*}(x,y,z,u)&\stackrel{\Pi}{\lra}&(x,y,?,t+x)\stackrel{\Lambda_2}{\lra}(t,y,?,t+x)\\&\stackrel{\Pi}{\lra}&(t,y,?,t+x')\stackrel{\Lambda_2}{\lra}(x',y,?,t+x')\stackrel{\Pi}{\lra}(x',y,z',u').\end{eqnarray*}

If $a(t+x)$ is Artin-Schreier and $t+x'$ is square central then we have
\begin{eqnarray*}(x,y,z,u)&\stackrel{\Pi}{\lra}&(x,y,a(t+x),?)\stackrel{\Lambda_2}{\lra}(t,y,a(t+x),?)\\&\stackrel{\Pi}{\lra}&(t,y,?,t+x')\stackrel{\Lambda_2}{\lra}(x',y,?,t+x')\stackrel{\Pi}{\lra}(x',y,z',u').\end{eqnarray*}

The case of square-central $t+x$ and Artin-Schreier $a' (t+x')$ is essentially the same as the last one.
\end{proof}

\begin{rem}\label{twoentries}
As a result, every two quadruples of generators are connected by a chain of up to $9$ steps of type $\Pi$ and $6$ steps of type $\Lambda_2$.
\end{rem}

\section{Replacing a step of type $\Lambda_2$ with steps of types $\Omega_i$, $\Omega_s$, $\Omega_c$ and $\Lambda_1$}

I this section we shall show how a step of type $\Lambda_2$ can be obtained by up to three steps, one of which can be of type $\Omega_i$, $\Omega_s$ or $\Omega_c$ and the others are of type $\Lambda_1$. Since $\Pi$ can be obtained by up to three steps of type $\Lambda_1$, it means that every two quadruples of generators are connected by a chain of up to $45$ steps, where up to $6$ of them are of type $\Omega_i$, $\Omega _s$ or $\Omega_c$ and the rest are of type $\Lambda_1$.

\begin{lem}
If a step of type $\Lambda_2$ preserves two inseparable generators, i.e. $(x,y,z,u) \stackrel{\Lambda_2}{\lra} (x',y,z',u)$ then it can be achieved by at most two steps of type $\Lambda_1$ and one of type $\Omega_i$.
\end{lem}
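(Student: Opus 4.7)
The plan is to exploit the structure of the common inseparable pair. Since $y$ and $u$ commute and are both square-central, $F[y,u]$ is a commutative $F$-subalgebra of dimension $4$; because $A$ is a biquaternion division algebra of dimension $16$, $F[y,u]$ is a maximal subfield and is therefore self-centralizing. The differences $p = x' + x$ and $q = z' + z$ each commute with both $y$ and $u$ (the pairs $\{x,x'\}$ and $\{z,z'\}$ obey the same relations with those generators), so $p, q \in F[y,u]$, and I can write
\[
p = a + by + cu + dyu, \qquad q = a' + b'y + c'u + d'yu
\]
with scalars in $F$.

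The next task is to cut down the allowed coefficients. Using $xy + yx = y$ and $xu = ux$, a direct calculation gives $xp + px = by + d\cdot yu$, while $p^2 \in F$; hence the Artin-Schreier condition $(x+p)^2 + (x+p) \in F$ reduces, modulo $F$, to the term $cu$, forcing $c = 0$. The symmetric computation for $z' = z + q$, using $zy = yz$ and $zu + uz = u$, forces $b' = 0$. The final constraint is $[x', z'] = 0$: since $[x, z] = 0$ and $[p, q] = 0$, this collapses to $[x, q] + [p, z] = 0$, and short calculations yield $[x, yu] = yu = [yu, z]$, so the commutator becomes $(d + d')yu = 0$, whence $d = d'$ in characteristic $2$.

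With the common coefficient $d$ in hand, the three-step chain is
\[
(x, y, z, u) \;\stackrel{\Omega_i}{\lra}\; (x + dyu, y, z + dyu, u) \;\stackrel{\Lambda_1}{\lra}\; (x', y, z + dyu, u) \;\stackrel{\Lambda_1}{\lra}\; (x', y, z', u),
\]
where the first arrow is the $\Omega_i$ step with parameter $d$, the second adds $a + by \in F + Fy$ to the first coordinate, and the third adds $a' + c'u \in F + Fu$ to the third coordinate. Validity of the intermediate quadruples reduces to routine commutator checks, all of which depend on the identification $d = d'$ to make the leftover cross-commutators cancel in characteristic $2$.

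The main obstacle is precisely that identification $d = d'$: the $\Omega_i$ move can only shift $x$ and $z$ by the same $yu$-multiple, so if the $yu$-components of $p$ and $q$ could differ, one $\Omega_i$ step together with two independent $\Lambda_1$ corrections would not suffice. Once the commutativity of $x'$ and $z'$ is used to force this matching, the rest of the proof is mechanical verification.
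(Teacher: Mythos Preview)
Your proof is correct. Both your argument and the paper's hinge on isolating the $yu$-component that must be handled by the single $\Omega_i$ step, but the executions differ. You expand $p = x'+x$ and $q = z'+z$ in the basis $\{1,y,u,yu\}$ of the maximal subfield $F[y,u]$ and use the Artin--Schreier and commutativity constraints to pin down the coefficients, in particular forcing the $yu$-coefficients of $p$ and $q$ to agree; your chain then runs $\Omega_i,\ \Lambda_1,\ \Lambda_1$. The paper instead works directly with the commutator $r = xz'+z'x$, observes that $r$ commutes with $y$ and $u$ (hence $r \in F[y,u]$) and satisfies $xr+rx = z'r+rz' = r$, which forces $r = d\,yu$ in one stroke; its chain runs $\Lambda_1,\ \Omega_i,\ \Lambda_1$. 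Note that your $[x,q] = d'\,yu$ is exactly the paper's $r$ (since $[x,z]=0$), so the two arguments are really extracting the same invariant. The paper's route is shorter and avoids the coefficient bookkeeping, while yours makes the role of each constraint more transparent.
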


\begin{proof}
The element $x z'+z' x+z'$ is nonzero because $(x z'+z' x+z') u+u (x z'+z' x+z')=u$. Consequently, $(x,y,x z'+z' x+z',u)$ is a quadruple of generators.
Similarly, $(x z'+z' x+x,y,z',u)$ is a quadruple of generators.
One can therefore do the following steps: $$(x,y,z,u) \stackrel{\Lambda_1}{\lra} (x,y,x z'+z' x+z',u) \stackrel{\Omega_i}{\lra} (x z'+z' x+x,y,z',u) \stackrel{\Lambda_1}{\lra} (x',y,z',u).$$

The element $r=x z'+z' x$ was added in the middle step to the Artin-Schreier generators. This element commutes with $y$ and $u$, and therefore it is in $F[u,y]$ and consequently of the form $a+b y+c u+d y u$. This element however also satisfies $x r+r x=z' r+r z'=r$. Hence, $a=b=c=0$.
\end{proof}

\begin{lem}
If a step of type $\Lambda_2$ preserves two Artin-Schreier generators, i.e. $(x,y,z,u)\stackrel{\Lambda_2}{\lra}(x,y',z,u')$ then it can be achieved by at most two steps of type $\Lambda_1$ and one of type $\Omega_s$.
\end{lem}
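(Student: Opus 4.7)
The plan is to identify $y'$ and $u'$ as $F[x,z]$-multiples of $y$ and $u$, and then to factor these multipliers so that one piece realises a $\Lambda_1$ on $y$, a common middle piece realises an $\Omega_s$ step, and the last piece realises a $\Lambda_1$ on $u$.

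The first observation is that $y'y^{-1}$ commutes with $z$ (since both $y$ and $y'$ do) and with $x$ (a brief rearrangement using the Artin--Schreier relations $xy+yx=y$ and $xy'+y'x=y'$), and hence lies in $C_A(F[x,z])=F[x,z]$, as $F[x,z]$ is a maximal subfield of $A$. Thus $y'=wy$ for some $w\in F[x,z]$, and symmetrically $u'=w'u$ for some $w'\in F[x,z]$. Let $\sigma$ and $\tau$ be the $F$-automorphisms of $F[x,z]$ implemented by conjugation by $y$ and by $u$ respectively, so $\sigma:x\mapsto x+1$, $z\mapsto z$ and $\tau:x\mapsto x$, $z\mapsto z+1$. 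Setting $s:=x+z$, the three quadratic subfields $F[x]$, $F[z]$, $F[s]$ of $F[x,z]$ are fixed by $\tau$, $\sigma$, and $\sigma\tau$ respectively. Squaring $y'=wy$ shows that $(y')^2\in F$ amounts to $w\sigma(w)\in F$; symmetrically $(u')^2\in F$ amounts to $w'\tau(w')\in F$; and $y'u'=u'y'$ amounts to $w\sigma(w')=w'\tau(w)$.

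The technical heart of the argument is the factorization claim that $w\sigma(w)\in F$ if and only if $w=fc$ for some $f\in F[x]$ and $c\in F[s]$. To verify this I would expand $w=A+Bx+Cs+Dxs$ in the $F$-basis $\{1,x,s,xs\}$ of $F[x,z]=F[x]\otimes_F F[s]$ (using $x^2=x+\alpha$, $s^2=s+(\alpha+\gamma)$, $xs=sx$), compute $w\sigma(w)$, and check that the condition $w\sigma(w)\in F$ collapses to the single determinantal equation $AD=BC$; this is exactly the rank-one relation that permits a factorization of $w$ as a product $(f_0+f_1x)(c_0+c_1s)$. The symmetric argument with $\tau$ in place of $\sigma$ gives $w'=hc'$ for some $h\in F[z]$ and $c'\in F[s]$.

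Plugging both factorizations into $w\sigma(w')=w'\tau(w)$ and using that $\sigma$ and $\tau$ restrict to the same nontrivial $F$-automorphism of $F[s]$, the relation reduces to $c\,\bar{c'}=c'\,\bar c$ in the field $F[s]$, equivalently $c/\bar c=c'/\bar{c'}$, which forces $c'=\lambda c$ for some $\lambda\in F^\times$. Absorbing $\lambda$ into $h$ lets me take $c=c'$. With $f\in F[x]$, $c\in F[s]$, $h\in F[z]$ now in hand, the desired chain is
\[ (x,y,z,u)\stackrel{\Lambda_1}{\lra}(x,fy,z,u)\stackrel{\Omega_s}{\lra}(x,fy\bar c,z,u\bar c)\stackrel{\Lambda_1}{\lra}(x,y',z,u'), \]
in which the $\Omega_s$ step uses the parameter $\bar c\in F[s]$ (so that commuting $\bar c$ past $y$ recovers $f\sigma(\bar c)y=fcy=wy=y'$), and the final $\Lambda_1$ multiplies the $u$-entry by $h$ to produce $u'=w'u$. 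The main obstacle is the factorization lemma for $w\sigma(w)\in F$, which is a short but nontrivial computation exploiting the tensor decomposition $F[x,z]=F[x]\otimes_F F[s]$.
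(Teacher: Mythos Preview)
Your proof is correct but takes a genuinely different route from the paper's.

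The paper argues by a direct construction: it case-splits on whether $y$ and $u'$ commute, and in the interesting case builds the intermediate quadruples using the single element $yu'+u'y$, producing the chain
\[
(x,y,z,u)\stackrel{\Lambda_1}{\lra}(x,y,z,(yu'+u'y)y)\stackrel{\Omega_s}{\lra}(x,(yu'+u'y)^{-1}u',z,u')\stackrel{\Lambda_1}{\lra}(x,y',z,u').
\]
The verification that the $\Omega_s$ multiplier has the required shape $a+b(x+z)$ is then done by checking that it commutes with $x$, $z$, and $yu'$.

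Your approach is structural: you pass to the Galois picture of the biquadratic field $F[x,z]$ with its three intermediate quadratic subfields $F[x]$, $F[z]$, $F[s]$, reduce everything to the multipliers $w,w'\in F[x,z]$, and prove the key factorization lemma that $w\sigma(w)\in F$ forces $w\in F[x]\cdot F[s]$ (your determinantal computation $AD=BC$ in the $\{1,x,s,xs\}$ basis is correct, and does give the rank-one factorization). This explains \emph{why} the $\Omega_s$ multiplier lives in $F[s]$ and the two $\Lambda_1$ multipliers in $F[x]$ and $F[z]$: they are exactly the three quadratic subfields, and the compatibility $c'=\lambda c$ is Hilbert~90 in disguise. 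Your chain also absorbs the commuting case ($w'\in F[z]$ forces $w\in F[x]$ via $w=\tau(w)$, so the $\Omega_s$ step becomes a scalar) without a separate case analysis.

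The paper's argument is shorter and more hands-on; yours is longer but more conceptual and makes the role of the subfield lattice transparent. One cosmetic point: $\Omega_s$ is defined as left multiplication by an element of $F[s]$, so it is slightly cleaner to phrase your middle step as multiplying on the left by $c$ (giving $cfy=fcy=y'$ and $cu$) rather than on the right by $\bar c$, though the two are of course equal.
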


\begin{proof}
If $y u'+u' y=0$ then $y$ commutes with $u'$ and then one can do $$(x,y,z,u) \stackrel{\Lambda_1}{\lra} (x,y,z,u') \stackrel{\Lambda_1}{\lra} (x,y',z,u').$$

Otherwise, $y u'+u' y$ is square-central, and $(x,y,z,(y u'+u' y) y)$ is a quadruple of generators.
Similarly $(x,(y u'+u' y)^{-1} u',z,u')$ is a quadruple of generators.
One can therefore do $$(x,y,z,u) \stackrel{\Lambda_1}{\lra} (u,y,z,(y u'+u' y) y) \stackrel{\Omega_s}{\lra} (x,(y u'+u' y)^{-1} u',z,u') \stackrel{\Lambda_1}{\lra} (x,y',z,u').$$

In the middle step, the square-central generators were multiplied by $q=y^{-1} (y u'+u' y)^{-1} w'$. This element commutes with $x$ and $z$ and therefore $q \in F[x,z]$ and consequently of the form $a+b x+c z+d x z$. However, $q$ commutes with $y u'$, and therefore $d=0$ and $b=c$.
\end{proof}

\begin{lem}
If a step of type $\Lambda_2$ preserves one Artin-Schreier generator and one square-central generator, i.e. $(x,y,z,u)\stackrel{\Lambda_2}{\lra}(x',y,z,u')$ then it can be achieved by either at most three steps of type $\Lambda_1$ or at most two steps of type $\Lambda_1$ and one of type $\Omega_c$.
\end{lem}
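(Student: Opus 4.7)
The plan is to analyze the transformation via the element $\mu := u' u^{-1}$. Since $u'$ and $u$ both commute with $y$ and satisfy $[z,\cdot] = \cdot$, we get $\mu \in C_A(F[y,z]) = F[y,z]$. Writing $\mu = \alpha_1 + y\alpha_2$ with $\alpha_1, \alpha_2 \in F[z]$, the square-central condition $(u')^2 \in F$ forces a proportionality $c_1 d_2 = c_2 d_1$ on the coefficients of $\mu$, so whenever $\alpha_1 \neq 0$ we may write $\alpha_2 = b\alpha_1$ for a unique $b \in F$, giving the factorization $\mu = \alpha_1(1+by)$. This is exactly the type of factor a single step of $\Omega_c(b)$ produces on the square-central side, which is what makes the plan go through.

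First I would dispose of two easy subcases with only two $\Lambda_1$ steps. If $u' \in F[z,u]$ (equivalently $\alpha_2 = 0$, i.e.\ $b = 0$), then $x$ already commutes with $u'$, so $(x,y,z,u) \stackrel{\Lambda_1}{\lra} (x,y,z,u') \stackrel{\Lambda_1}{\lra} (x',y,z,u')$ is a valid chain. Symmetrically, if $x' \in F[x,y]$ we route through $(x',y,z,u)$. For the generic case with $\alpha_1 \neq 0$ and $b \neq 0$, the plan is
\[
(x,y,z,u) \stackrel{\Omega_c(b)}{\lra} (x+\psi_b, y, z, (1+by)u) \stackrel{\Lambda_1}{\lra} (x+\psi_b, y, z, u') \stackrel{\Lambda_1}{\lra} (x', y, z, u'),
\]
where $\psi_b := by(1+by)^{-1}z$. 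The middle $\Lambda_1$ scales the square-central generator by $\alpha_1 \in F[z]$ to yield $\alpha_1(1+by)u = u'$, while the last $\Lambda_1$ changes only the Artin--Schreier generator $x+\psi_b$ to $x'$.

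The main verification is that the middle intermediate quadruple is valid, which reduces to checking that $x+\psi_b$ commutes with $u' = \alpha_1(1+by)u$. This holds because $\alpha_1 \in F[z]$ commutes with $x+\psi_b$ (as $\psi_b \in F[y,z]$ and $x$ commutes with $z$), while $(1+by)u$ commutes with $x+\psi_b$ by the very validity of the $\Omega_c(b)$ step; the product therefore commutes with $x+\psi_b$ as well. The hard part will be the edge case $\alpha_1 = 0$, where $\mu = y\alpha_2$ and the factorization above breaks down; in this scenario the commutation $[x',u']=0$ forces $x'-x$ into a very restricted shape (namely the $z,yz$ coefficients $(c_0,d_0)=(1,0)$), and a separate three-$\Lambda_1$ chain tailored to this constraint is required.
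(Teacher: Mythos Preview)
Your approach via $\mu = u'u^{-1} \in F[y,z]$ is a genuine alternative to the paper's, which instead analyzes the element $xu'+u'x+u'$. The two are directly related: since $x$ commutes with $\alpha_1,\alpha_2,u$ and $[x,y]=y$, one gets $xu'+u'x = y\alpha_2 u$ and hence $xu'+u'x+u' = \alpha_1 u$, so the paper's case distinction (this element zero or not) is exactly your split $\alpha_1 = 0$ versus $\alpha_1 \neq 0$. In the generic case $\alpha_1 \neq 0$ your factorization $\mu = \alpha_1(1+by)$ and the chain $\Omega_c(b) \to \Lambda_1 \to \Lambda_1$ are correct; the paper instead first does $\Lambda_1$ to replace $u$ by $\alpha_1 u = xu'+u'x+u'$, then $\Omega_c$ to reach $u'$, then $\Lambda_1$ on the first slot. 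Your ordering is arguably cleaner because the factorization makes the parameter $b$ explicit from the outset rather than extracting it a posteriori from the identity $xu'+u'x = b(xu'+u'x+u')y$.

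The genuine gap is the edge case $\alpha_1 = 0$, which you leave as a bare assertion. Here $u' = y\alpha_2 u$, and from $[x',u']=0$ one finds $[x',u]=u$; since $x'+x+z$ then commutes with $y,z,u$ it lies in $F[y]$, forcing $x' = x+z+p+ry$ for some $p,r \in F$. In particular $x'$ differs from $x$ by a summand $z$. But a $\Lambda_1$ move on the first generator (in any quadruple with second pair generating $F[z,u]$) can only add an element of $F[y]$, and a single $\Omega_c(b)$ contributes $by(1+by)^{-1}z$, whose $z$-coefficient $b^2\beta/(1+b^2\beta)$ is never $1$. So neither your promised three-$\Lambda_1$ chain nor the pattern $\Lambda_1$--$\Omega_c$--$\Lambda_1$ can reach such an $x'$. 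The paper treats this as its easy case via $(x,y,z,u)\to(x,y,z,yu')\to(x',y,z,yu')\to(x',y,z,u')$; you should scrutinize that chain rather than take it for granted, since the middle quadruple requires $x'$ to commute with $yu'$, whereas $[x',yu'] = [x',y]u' + y[x',u'] = yu' \neq 0$.
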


\begin{proof}
If $x u'+u' x+u'$ is zero then $x u'+u' x=u'$.
Therefore one can do $(x,y,z,u) \stackrel{\Lambda_1}{\lra} (x,y,z,y u') \stackrel{\Lambda_1}{\lra} (x',y,z,y u') \stackrel{\Lambda_1}{\lra} (x',y,z,u')$.

Let us assume $x u'+u' x+u' \neq 0$.
\begin{eqnarray*}
x (x u'+u' x+u')+(x u'+u' x+u') x &= &x^2 u'+x u' x+x u'+x u' x+u' x^2+u' x\\=(x+\alpha) u'+x u'+u' (x+\alpha)+u' x &=&x u'+\alpha u'+x u'+u' x+\alpha u'+u' x=0.
\end{eqnarray*}
Therefore $x$ commutes with $x u'+u' x+u'$. In fact, $(x,y,z,x u'+u' x+u')$ is a quadruple of generators, and in particular $x u'+u' x+u'$ is square-central.
Now $$x (x u'+u' x+u')^{-1}(x u'+u' x)+(x u'+u' x+u')^{-1}(x u'+u'x) x=(x u'+u' x+u')^{-1}(x u'+u' x),$$ and $(x u'+u' x+u')^{-1}(x u'+u' x)$ commutes with $z$ and $x u'+u' x+u'$, and therefore $(x u'+u' x+u')^{-1}(x u'+u' x)=b y+c x y$ for some $b,c \in F$, but $(x u'+u' x+u')^{-1}(x u'+u' x)$ also commutes with $x u'+u' x$ while $y (x u'+u' x)+(x u'+u' x) y=0$ and $x y (x u'+u' x)+(x u'+u' x) x y=y (x u'+u' x)$ and therefore $c=0$. In particular $x u'+u' x=b (x u'+u' x+u') y$.

It is a straight-forward calculation to check that $$(x+b y (x u'+u' x+u') u'^{-1} z) u'+u' (x+b y (x u'+u' x+u') u'^{-1} z)=0,$$
$$(x+b y (x u'+u' x+u') u'^{-1} z) z+z (x+b y (x u'+u' x+u') u'^{-1} z)=0,$$ 
and so $(x+b y (x u'+u' x+u') u'^{-1} z,y,z,u')$ is a quadruple of generators too.

Therefore we have the chain
$$(x,y,z,u) \stackrel{\Lambda_1}{\lra} (x,y,z,x u'+u' x+u') \stackrel{\Omega_c}{\lra} (x+\beta y (x u'+u' x+u') u'^{-1} z,y,z,u') \stackrel{\Lambda_1}{\lra} (x',y,z,u').$$
\end{proof}

\begin{thm}
Every two quadruples of elements are connected by a chain of up to $45$ steps, of which up to $6$ are of type $\Omega_i$, $\Omega_s$ or $\Omega_c$ and the rest are of type $\Lambda_1$.
\end{thm}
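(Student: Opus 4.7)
The plan is to assemble the final bound by combining the incremental decompositions already established. First I would invoke \Rref{twoentries}, which packages \Tref{Onesteps} and \Tref{TwoSteps} into the statement that any two generating quadruples are joined by a chain of at most $9$ steps of type $\Pi$ and at most $6$ steps of type $\Lambda_2$. It then suffices to express each $\Pi$-step and each $\Lambda_2$-step in terms of $\Lambda_1$-steps and $\Omega$-type steps, and to do the arithmetic.

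For the $\Pi$-steps: by definition only one of the two pairs is altered, so the move takes place inside a single quaternion subalgebra of $A$. Invoking the classical chain lemma for a single quaternion algebra in characteristic $2$ — the only external result needed here — each such move can be realised by at most three $\Lambda_1$-steps. Thus the $9$ $\Pi$-steps contribute at most $27$ atomic steps, all of type $\Lambda_1$.

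For the $\Lambda_2$-steps: the three lemmas proved earlier in this section exhaust the possible parities of the preserved pair of generators. If the preserved generators are both square-central, the step reduces to at most two $\Lambda_1$-steps and one $\Omega_i$-step; if both are Artin-Schreier, to at most two $\Lambda_1$-steps and one $\Omega_s$-step; and in the mixed case, either to at most three $\Lambda_1$-steps or to two $\Lambda_1$-steps together with one $\Omega_c$-step. In every case at most three atomic steps suffice, at most one of which is of $\Omega$-type. Hence the $6$ $\Lambda_2$-steps contribute at most $18$ atomic steps, of which at most $6$ are of type $\Omega_i$, $\Omega_s$ or $\Omega_c$.

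Summing gives $27+18=45$ atomic steps in total, with at most $6$ of type $\Omega_i$, $\Omega_s$ or $\Omega_c$ and the remaining at least $39$ of type $\Lambda_1$, which is exactly the claim. There is no real obstacle at this stage: the proof is a bookkeeping exercise assembling the decompositions already proved, and the genuine difficulties have been dispatched earlier, in \Tref{Onesteps}, \Tref{TwoSteps}, and the three $\Lambda_2$-decomposition lemmas of this section.
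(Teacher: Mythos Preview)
Your argument is correct and is exactly the bookkeeping the paper itself performs: the paper states this theorem without a separate proof, having already explained in the introduction and at the start of this section that combining \Rref{twoentries}, the three $\Lambda_2$-decomposition lemmas, and the classical three-step chain lemma for a single quaternion algebra yields the bound $9\cdot 3 + 6\cdot 3 = 45$ with at most $6$ $\Omega$-type steps. There is nothing to add.
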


\section*{Acknowledgements}
I would like to thank Prof. Jean-Pierre Tignol for the helpful discussions.

\section*{Bibliography}
\bibliographystyle{amsalpha}
\bibliography{bibfile}

\def\cprime{$'$}
\providecommand{\bysame}{\leavevmode\hbox to3em{\hrulefill}\thinspace}
\providecommand{\MR}{\relax\ifhmode\unskip\space\fi MR }
% \MRhref is called by the amsart/book/proc definition of \MR.
\providecommand{\MRhref}[2]{%
  \href{http://www.ams.org/mathscinet-getitem?mr=#1}{#2}
}
\providecommand{\href}[2]{#2}
\begin{thebibliography}{CV13}

\bibitem[CV13]{ChapVish2}
Adam Chapman and Uzi Vishne, \emph{Square-central elements and standard
  generators for biquaternion algebras}, Israel J. Math. \textbf{197} (2013),
  no.~1, 409--423. \MR{3096621}

\bibitem[Siv12]{Siv}
A.~S. Sivatski, \emph{The chain lemma for biquaternion algebras}, J. Algebra
  \textbf{350} (2012), 170--173. \MR{2859881 (2012j:16037)}

\end{thebibliography}
\end{document}